\newtheorem{theorem}{Theorem}[section]
\newtheorem{proposition}[theorem]{Proposition}
\newtheorem{corollary}[theorem]{Corollary}
\theoremstyle{definition}
\newtheorem{definition}[theorem]{Definition}
\theoremstyle{remark}
\numberwithin{equation}{section}
\begin{document}
\title{Lie Superalgebras arising from bosonic representation}
\author{Naihuan Jing}
\address{School of Sciences, South China University of Technology, Guangzhou 510640, China
and Department of Mathematics, North Carolina State University, Raleigh, NC 27695, USA}
\email{jing@math.ncsu.edu}
\author{Chongbin Xu*}
\address{School of Science, South China University of Technology, Guangzhou 510640
and School of Mathematics \& Information, Wenzhou University,
Zhejiang 325035, China} \email{xuchongbin1977@126.com}
\thanks{*Corresponding author}\thanks{Jing gratefully acknowledges the support from
Simons Foundation and NSF}
\keywords{Toroidal Lie superalgebra, bosonic fields, realization}
\subjclass[2000]{Primary: 17B60, 17B67, 17B69; Secondary: 17A45,81R10}

\begin{abstract} A 2-toroidal Lie superalgebra is constructed using bosonic fields and
a ghost field. The superalgebra contains  $osp(1|2n)^{(1)}$ as a
distinguished subalgebra and behaves similarly to the toroidal Lie
superalgebra of type $B(0, n)$. Furthermore this algebra is a
central extension of the algebra $osp(1|2n)\otimes \mathbb C[s,
s^{-1}, t,t^{-1}]$.
\end{abstract}
\maketitle

\section{Introduction}
Realization of Lie (super)algebras via bosonic or fermionic fields
has been a successful approach in solving various problems in
mathematical physics. From mathematical perspectives, representing an
algebra in terms of known classical algebraic structures (Weyl or
Clifford algebras) amounts to constructing a homomorphism from the
initial algebra to the target algebra. In this way properties of the
initial algebra can be studied by techniques of linear algebra. On
one hand representations of the target algebra give rise to
representations of the initial algebra which then enable one to gain
information about eigenvalues of the operators or the expectation
values of various physical quantities. On the other hand physical
intuition usually suggests how this homomorphism can be established,
and this may happen in several different contexts or via different
realizations. Mathematically it may be proved that all these
realizations are actually representing the same algebra.

 Lie (super)algebras play an important role in both mathematics and physics.  The realization problem,
   in particular for infinite dimensional cases,
 is one of the first questions to be studied. It was well-known in physics literature that finite-dimensional
 simple Lie (super)algebras of classical types
 can be realized by ferminoic or bosonic operators, i.e. within Clifford or Weyl algebras. However, the problem
 for affine Lie (super)algebras
 is more involved and requires sophisticated generalization. By 1980's the problem of realizing affine Lie
 (super)algebras had been solved partly
 by several groups \cite{F, KP}. In the famous work \cite{FF}, A. Feingold and I. Frenkel realized
 classical affine Lie algebras using ferminoic fields
 and bosonic fields respectively. Since then, their method has been generalized to other algebras such
 as extended affine Lie algebras,
 affine Lie superalgebras, Tits-Kantor-K\"ocher algebras, toroidal Lie algebras, Lie algebras with central extentions, two-parameter quantum affine algebras etc.
 (see \cite{G, KW, T, JMT, L, JZ}).

More recently, in \cite{JM, JMX} 2-toroidal Lie algebras of
classical types were realized uniformly using bosonic fields or
ferminoic fields with help of a ghost field based on the
Moody-Rao-Yokonuma presentation of toroidal Lie algebras \cite{MRY}.
The method used in these two papers not only generalizes
Feingold-Frenkel construction to toroidal algebras but also fills up
missing bosonic/fermionic realizations for orthogonal/symplectic
types.

One can ask a similar question on how to realize 2-toroidal Lie
superalgebra of classical types. 
In this paper, we first define a central extension of the
superalgebra $\mathfrak g\otimes \mathbb C[s, s^{-1}, t,t^{-1}]$
which we will call the loop-like toroidal Lie superalgebra of type
$B(0,n)$ in light of MRY presentation, and then construct its
representation using mixed bosons and fermions as well as a ghost
field. Since the kernel of the homomorphism is contained in fields
corresponding to imaginary roots, our representation can be lifted
to that of the universal central extension of the 2-toroidal Lie
superalgebra. It would be interesting to show that the loop-like
toroidal superalgebra is indeed the universal central extension of
the superalgebra $\mathfrak g\otimes \mathbb C[s, s^{-1},
t,t^{-1}]$. Our result may be viewed as a testing example for this
and we hope further computations can be made to reveal the structure
of the kernel of the map from the loop-like toroidal superalgebra to the universal central
extension.

\section{Toroidal Lie superalgebra  of type $B(0,n)$}
A Lie superalgebra $\mathfrak{g}=\mathfrak{g}_{\overline{0}}\oplus\mathfrak{g}_{\overline{1}}$ is
 a $\mathbb{Z}_{2}$-graded vector space
equipped with a bilinear map $[\cdot,\cdot]:\mathfrak{g}\times\mathfrak{g}\rightarrow\mathfrak{g}$ such that

   $\quad1)~ [\mathfrak{g}_{\alpha},\mathfrak{g}_{\beta}]\subseteq\mathfrak{g}_{\alpha+\beta},$

   $\quad2) ~[a,b]=-(-1)^{\mbox{deg}(a)\mbox{deg}(b)}[b,a],$

   $\quad3)~[a,[b,c]]=[[a,b],c]]+(-1)^{\mbox{deg}(a)\mbox{deg}(b)}[b,[a,c]],$ \\
   where $\alpha,\beta\in\mathbb{Z}_{2}$ and $a,b,c$ are homogenous elements.

According to Kac \cite{K1} simple Lie superalgebras are classified into two
families--the classical types and the Cartan
types. Among the classical superalgebras, one usually
separates the strange series $P(n)$ and $Q(n)$ from the list of basic
superalgebras: the series $A (m, n)$, 
$B(m,n), C(n+1)$, and $D(m,n)$ and the
exceptional types $F(4)$, $G(3)$ and $D(2, 1;
\alpha)$. The othorsymplectic series $B(m,n)$ can be further
divided into two classes: $m>0$ and $m=0$. In this paper we consider the
simplest case $B(0, n)$.

Let $R=\mathbb C[s^{\pm1},t^{\pm1}]$ be the ring of Laurant
polynomials in $s, t$. Let $\Omega_R$ be the $R$-module of
differentials spanned by $da, a\in R$, and $d\Omega_R$ is the space
of exact forms. Then $\Omega_R/d\Omega_R$ has a basis consisting of
$\overline{s^{p-1}t^{q}ds}$, $\overline{s^{p}t^{-1}dt}$,
$\overline{s^{-1}ds}$.  Let $\mathfrak g$ be a simple Lie
superalgebra, the toroidal Lie superalgebra $T(\mathfrak g)$ is the
central extension of the loop superalgebra $\mathfrak g\otimes R$:
\begin{equation*}
T({\mathfrak g})={\mathfrak g}\otimes R\oplus \Omega_R/d\Omega_R
\end{equation*}
under the Lie (super)bracket: ($x,y\in \mathfrak{g},~a,b\in R$)
$$[x\otimes a, y\otimes b]=[x, y]\otimes ab+(x|y)\overline{(da)b}.$$
with parities defined by
$$\mbox{deg}(x\otimes a)=\mbox{deg}(x),\quad \mbox{deg}(\Omega_R/d\Omega_R)=\overline{0}$$

In practice, one can define a Lie superalgebra by generators and
relations with appropriate
   parities for generators.
   In what follows, we will define the so-called toroidal Lie superalgebra of type $B(0,n)$ this way.

For $n\geq1$, let $A=(a_{ij})$ be the extended distinguished Cartan
matrix of the affine Lie superalgebra $B(0,n)^{(1)}$, i.e.
\begin{equation*}
A=\begin{pmatrix} 2 & -1 & 0 &\cdots & 0& 0 & 0 \\
-2 & 2 & -1& \cdots & 0& 0 & 0 \\
0 & -1 & 2& \cdots & 0& 0 & 0 \\
\cdot & \cdot & \cdot & \cdots & \cdot & \cdot &\cdot \\
0 & 0 & 0& \cdots & 2&-1 & 0 \\
0 & 0 & 0& \cdots & -1 &2 & -1 \\
0 & 0 & 0& \cdots &0&-2 & 2
\end{pmatrix},
\end{equation*}
and let
$Q=\mathbb{Z}\alpha_{0}\oplus\mathbb{Z}\alpha_{1}\oplus\cdots\oplus\mathbb{Z}\alpha_{n}$
be the root lattice, where $\alpha_0, \cdots, \alpha_{n-1}$ are even
roots and $\alpha_n$ is odd. The standard invariant form is given by
\begin{equation*}
(\alpha_i, \alpha_j)=d_ia_{ij},
\end{equation*}
where $(d_0, d_1, \cdots, d_{n-1}, d_n)=(2, 1, \cdots, 1, 1/2)$,
thus $2(\alpha_i, \alpha_j)/(\alpha_i, \alpha_i)=a_{ij}$. Let
$\delta=\alpha_0+2\sum_{i=1}^{n}\alpha_i$.

\begin{definition}\label{def} The (loop-like) toroidal Lie superalgebra $\mathfrak{T}$ of type $B(0,n)$ is the Lie superalgebra generated by
$$\{\mathcal{K},\alpha_{i}(k),x^{\pm}_{i}(k)|\, 0\leq i\leq n,k\in\mathbb{Z}\}$$
with parities given as :
$$\mbox{deg}(\mathcal{K})=\mbox{deg}(\alpha_{i}(k))=\overline{0}, \quad 0\leq i\leq n,k\in\mathbb{Z};$$
$$~~\mbox{deg}(x^{\pm}_{i}(k))=
\left\{
\begin{array}{ll}
\overline{0}&\mbox{if}~0\leq i\leq n-1,k\in\mathbb{Z};\\
\overline{1}&\mbox{if}~i=n,k\in\mathbb{Z}.
\end{array}
\right.$$
and defining relations:
\begin{eqnarray*}
   &1)&[\mathcal{K},\alpha_{i}(k)]=[\mathcal{K},x^{\pm}_{i}(k)]=0; \\
   &2)& [\alpha_{i}(k),\alpha_{j}(l)]=k(\alpha_{i}|\alpha_{j})\delta_{m,-n}\mathcal{K}; \\
   &3)& [\alpha_{i}(k),x^{\pm}_{j}(l)]=\pm(\alpha_{i}|\alpha_{j})x^{\pm}_{j}(k+l); \\
   &4)& [x^{+}_{i}(k),x^{-}_{j}(l)]=-\delta_{ij}\frac{2}{(\alpha_{i}|\alpha_{j})}\{\alpha_{i}(k+l)
   +k\delta_{k,-l}\mathcal{K}\};\\
   &5)&[x^{+}_{i}(k),x^{+}_{j}(l)]=[x^{-}_{i}(k),x^{-}_{j}(l)]=0;\\
   &&(\mbox{ad}x^{+}_{i}(0))^{1-a_{ij}}x^{+}_{j}(k))=0,\mbox{if}~i\neq j;\\
   &&(\mbox{ad}x^{-}_{i}(0))^{1-a_{ij}}x^{-}_{j}(k))=0,\mbox{if}~i\neq j,
\end{eqnarray*}
where the super-bracket is adopted: $[X,Y]=XY-(-1)^{\mbox{deg}(X)\mbox{deg}(Y)}YX$.
\end{definition}

The algebra $\mathfrak{T}$ is a $Q\times \mathbb Z$-graded Lie
superalgebra under the grading: $deg \mathcal{K}=(0, 0), deg
\alpha_i(k)=(0, k), deg\, x^{\pm}_i(k)=(\pm\alpha_i, k)$. We denote
the subspace of degree $(\alpha, k)$ by $\mathfrak T_k^{\alpha}$:
\begin{equation}
\displaystyle \mathfrak T=\oplus_{(\alpha, k)\in Q\times \mathbb
Z}\mathfrak T_k^{\alpha}.
\end{equation}
We remark that the center of this algebra is contained in the
subalgebra generated by $\mathfrak T_k^{n\delta}$, $k, n\in \mathbb
Z$.

Let $e_{i},f_{i},h_{i}$ ($i=1,2\cdots,n$) be the Chevalley
generators of Lie superalgebra $\mathcal {G}$ of type $B(0,n)$
corresponding to the distinguished simple root system
$\Pi=\{\alpha_{1},\cdots,\alpha_{n}\}$ and $\theta$ be the longest
root relative to $\Pi$. We also choose $e_{0}\in\mathcal
{G}_{\theta},f_{0}\in\mathcal {G}_{-\theta}$ and $h_{0}$ as in the
affine Lie algebra, then we have

\begin{proposition} \label{homo} The following map defines a surjective
homomorphism from loop-like toroidal superalgebra $\mathfrak T$ to
the algebra $T(B(0, n))$:
\begin{align*}
\mathcal{K}&\mapsto \overline{s^{-1}ds}\\
\alpha_i(k)&\mapsto d_i(h_i\otimes s^k+\delta_{i0}\overline{s^kt^{-1}dt}), \qquad i=0, \cdots, n\\
x^+_i(k)&\mapsto e_i\otimes s^k, \qquad i=1, \cdots, n\\
x^-_i(k)&\mapsto -f_i\otimes s^k, \qquad i=1, \cdots, n\\
x^+_0(k)&\mapsto e_0\otimes s^kt^{-1}, \\
x^-_0(k)&\mapsto -f_0\otimes s^kt
\end{align*}
\end{proposition}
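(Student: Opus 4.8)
The plan is to verify that the proposed assignment respects all the defining relations of $\mathfrak T$ from Definition \ref{def}, so that it extends uniquely to an algebra homomorphism by the universal property of a presentation; surjectivity will then follow because the image contains a generating set of $T(B(0,n))$. First I would recall the explicit bracket in $T(\mathfrak g)$, namely $[x\otimes a,y\otimes b]=[x,y]\otimes ab+(x|y)\,\overline{(da)b}$, and record the needed facts in $\Omega_R/d\Omega_R$: that $\overline{d(ab)}=0$, hence $\overline{(da)b}=-\overline{a\,db}$, and the evaluations $\overline{s^{k-1}ds\cdot s^l}=\delta_{k,-l}\,\overline{s^{-1}ds}$ and $\overline{(s^k t^{-1}dt)}$ together with $\overline{s^{-1}ds}$ spanning (part of) the center. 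I would also fix, once and for all, the structure constants of the finite superalgebra $\mathcal G=B(0,n)$ in the Chevalley basis: $[h_i,h_j]=0$, $[h_i,e_j]=a_{ij}e_j$, $[h_i,f_j]=-a_{ij}f_j$, $[e_i,f_j]=\delta_{ij}h_i$, together with $d_i h_i$ corresponding to the coroot normalized so that $(d_ih_i|d_jh_j)=d_ia_{ij}=(\alpha_i|\alpha_j)$, and the affine data $e_0\in\mathcal G_\theta$, $f_0\in\mathcal G_{-\theta}$, $[e_0,f_0]=h_0$ with $(e_0|f_0)$ and $(h_0|h_j)$ the usual affine values.

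Next I would check the relations one by one. Relation (1) is immediate since $\overline{s^{-1}ds}$ is central in $T(\mathfrak g)$. For (2), $[\alpha_i(k),\alpha_j(l)]$ maps to $d_id_j\big([h_i,h_j]\otimes s^{k+l}+(h_i|h_j)\overline{(ds^k)s^l}+(\text{$t$-terms when }i=j=0)\big)$; since $[h_i,h_j]=0$ and $d_id_j(h_i|h_j)\overline{(ds^k)s^l}=d_i(\alpha_i|\alpha_j)\,k\,\delta_{k,-l}\overline{s^{-1}ds}$, one has to see the extra differential $\overline{(ds^k)s^lt^{-1}dt}$-type terms cancel — here the point is that for $i=j=0$ the contributions $\overline{(ds^k t^{-1}dt)s^l}+\overline{s^k t^{-1}(d(s^l t^{-1}dt))}$ assemble into an exact form, so they vanish in $\Omega_R/d\Omega_R$; this is the first place where the ghost/extra-term bookkeeping is delicate. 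Relation (3) reduces to $[h_i,e_j]=a_{ij}e_j$ (and the $f_j$ analogue), with the $\overline{(da)b}$ term killed because $h_i$ pairs with $e_j$ to zero in $(\cdot|\cdot)$; the cases $i=0$ or $j=0$ need the affine bracket $[h_0,e_j\otimes s^k]$ and the powers of $t^{-1}$ to match, which they do since $a_{0j}$ is exactly the $\theta$-weight. Relation (4): $[x_i^+(k),x_i^-(l)]\mapsto -[e_i\otimes s^k,f_i\otimes s^l]=-(h_i\otimes s^{k+l}+(e_i|f_i)\overline{(ds^k)s^l})$; matching this to $-\tfrac{2}{(\alpha_i|\alpha_i)}(\alpha_i(k+l)+k\delta_{k,-l}\mathcal K)=-\tfrac{2}{(\alpha_i|\alpha_i)}(d_ih_i\otimes s^{k+l}+k\delta_{k,-l}\overline{s^{-1}ds}+\delta_{i0}(\cdots))$ forces the normalizations $\tfrac{2}{(\alpha_i|\alpha_i)}d_i=1$ (true since $(\alpha_i|\alpha_i)=2d_i$) and $(e_i|f_i)=\tfrac{2}{(\alpha_i|\alpha_i)}$; for $i=0$ one additionally checks that $-[e_0\otimes s^kt^{-1},f_0\otimes s^kt]$ produces $-\overline{(d(s^kt^{-1}))s^kt}$ whose class equals $k\overline{s^{-1}ds}$ up to the $\overline{s^kt^{-1}dt}$-piece built into $\alpha_0(0)$ — again an exact-form computation. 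Relation (5): the two elements $x_i^\pm(k)$, $x_j^\pm(l)$ go to $e_i\otimes(\cdot),e_j\otimes(\cdot)$ with $[e_i,e_j]=0$ in $\mathcal G$ for the relevant pairs (including $[e_n,e_n]=0$ as $e_n$ is odd and $2\alpha_n$ is not a root of $B(0,n)$), and the $\overline{(da)b}$ term vanishes since $(e_i|e_j)=0$; the Serre-type relations transfer because $\mathrm{ad}$ is sent to $\mathrm{ad}$ and the finite/affine Serre relations hold in $\mathcal G$ and $B(0,n)^{(1)}$.

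Finally, surjectivity: the images include $h_i\otimes s^k$, $e_i\otimes s^k$, $f_i\otimes s^k$ ($1\le i\le n$), $e_0\otimes s^kt^{-1}$, $f_0\otimes s^kt$, and $\overline{s^{-1}ds}$, $\overline{s^kt^{-1}dt}$; repeated brackets of $e_0\otimes s^kt^{-1}$ with $e_j\otimes s^m$ and $f_0\otimes s^kt$ with $f_j\otimes s^m$ generate all of $\mathcal G\otimes s^k t^{\pm 1}$, and further brackets climb to all powers $t^r$, while brackets among these produce every basis element of $\Omega_R/d\Omega_R$; hence the image is all of $T(B(0,n))$. I expect the \emph{main obstacle} to be the $i=0$ (and $i=j=0$) cases of relations (2) and (4): tracking the $\overline{s^k t^{-1}dt}$ correction terms built into the definition of $\alpha_0(k)$ and showing the surviving differentials are exact requires a careful identity in $\Omega_R/d\Omega_R$, e.g. $\overline{s^{a}t^{-1}\,d(s^b t^{-1}dt)}+\overline{(d(s^at^{-1}))\,s^bt^{-1}dt}=\overline{d(s^{a+b}t^{-2}dt)}=0$, and its variants; once these are in hand the rest is routine substitution. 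I would therefore isolate these differential-form lemmas first and then feed them into the relation checks.
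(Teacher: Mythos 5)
Your proposal is correct and follows essentially the same route as the paper, which simply asserts that the relations of Definition \ref{def} are ``straightforward to check'' and verifies only the Cartan--Cartan bracket $[h_i\otimes s^k,h_j\otimes s^l]$ as an example; your write-up is in fact considerably more complete (including the surjectivity argument, which the paper omits). One small simplification: the correction terms $\delta_{i0}\overline{s^kt^{-1}dt}$ in the image of $\alpha_0(k)$ lie in $\Omega_R/d\Omega_R$, which is central in $T(\mathfrak g)$ by construction, so they contribute nothing to any bracket and the exact-form cancellation you anticipate in relation (2) is vacuous --- the only place the differential-form identities genuinely matter is the $i=0$ case of relation (4), exactly as you isolate it.
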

\begin{proof} It is straightforward to check that the elements on the right satisfy the relations
in Definition \ref{def}.
For example,
\begin{align*}
[h_i\otimes s^k, h_j\otimes s^l]&=[h_i, h_j]\otimes s^{k+l}+(h_i, h_j)\overline{s^kd(s^l)}\\
&=d_i^{-1}d_j^{-1}k(\alpha_i, \alpha_j)\delta_{k,-l}\overline{s^{-1}ds}.\\
\end{align*}
\end{proof}

In fact the loop-like toroidal Lie superalgebra is a central
extension of the algebra $\mathfrak g\otimes \mathbb C[t, t^{-1}, s,
s^{-1}]$, as the kernel is contained in the subspace $\oplus_{n,k}
\mathfrak T^{n\delta}_k$, which is clearly central by the
commutation relations. General theory of central extension of Lie
superalgebra has been studied as the usual Lie algebras \cite{IK}.
It would be interesting to show that the algebra $\mathfrak T$ is
indeed the universal central extension.

For convenience we will present the structure of $\mathfrak{T}$ in terms of generating series. To this end, we define the generating series with coefficients from $\mathfrak{T}$:
$$\alpha_{i}(z)=\sum_{k\in\mathbb{Z}}\alpha_{i}(k)z^{-k-1},\quad x^{\pm}_{i}(z)=\sum_{k\in\mathbb{Z}}x^{\pm}_{i}(k)z^{-k-1}.$$

\bigskip

\begin{proposition}\emph{The relations of $\mathfrak{T}$ can be written as
follows.}
 \begin{eqnarray*}
    &1')& [\mathcal{K},\alpha_{i}(z)]=[\mathcal{K},x^{\pm}_{i}(z)]=0; \\
    &2')& [\alpha_{i}(z),\alpha_{j}(w)]=(\alpha_{i}|\alpha_{j})\partial_{w}\delta(z-w)\mathcal{K};\\
    &3')& [\alpha_{i}(z),x^{\pm}_{j}(w)]=\pm(\alpha_{i}|\alpha_{j})x^{\pm}_{j}(w)\delta(z-w); \\
    &4')& [x^{+}_{i}(z),x^{-}_{j}(w)]=-\delta_{ij}\frac{2}{(\alpha_{i}|\alpha_{j})}\{(\alpha_{i}(w)\delta(z-w)+\partial_{w}\delta(z-w)\mathcal{K}\};\\
    &5')& [x^{+}_{i}(z),x^{+}_{i}(w)]=[x^{-}_{i}(z),x^{-}_{i}(w)]=0;\\
    &&  \big(\prod_{k=1}^{1-a_{ij}}\mbox{ad}x^{+}_{i}(z_{k})\big)x^{+}_{j}(w)=0,\mbox{if}~i\neq j;\\
    &&  \big(\prod_{k=1}^{1-a_{ij}}\mbox{ad}x^{-}_{i}(z_{k})\big)x^{-}_{j}(w)=0,\mbox{if}~i\neq j.
 \end{eqnarray*}
\end{proposition}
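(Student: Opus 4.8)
The plan is to translate the coefficient-wise relations of Definition~\ref{def} into equivalent generating-series identities by multiplying by appropriate powers of the formal variables and summing. The central tool is the formal delta function $\delta(z-w)=\sum_{k\in\mathbb Z}z^{-k-1}w^{k}$, together with its standard properties: $f(z)\delta(z-w)=f(w)\delta(z-w)$ for any series $f$, the expansion $\partial_w\delta(z-w)=\sum_{k\in\mathbb Z}kz^{-k-1}w^{k-1}$, and the substitution identity $\sum_{k,l}c_{k+l}z^{-k-1}w^{-l-1}=\big(\sum_m c_m w^{-m-1}\big)\delta(z-w)$ used to package a convolution $x^{\pm}_j(k+l)$ into $x^{\pm}_j(w)\delta(z-w)$.

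First I would dispatch relation $1'$: since $\mathcal K$ is central and $\alpha_i(z)$, $x^{\pm}_i(z)$ are $\mathbb C$-linear combinations of $\alpha_i(k)$, $x^{\pm}_i(k)$, relation~1) gives $1'$ immediately. Next, for $2'$, I would compute $[\alpha_i(z),\alpha_j(w)]=\sum_{k,l}[\alpha_i(k),\alpha_j(l)]z^{-k-1}w^{-l-1}$ and substitute relation~2), $[\alpha_i(k),\alpha_j(l)]=k(\alpha_i|\alpha_j)\delta_{k,-l}\mathcal K$; the sum collapses to $(\alpha_i|\alpha_j)\mathcal K\sum_{k}k\,z^{-k-1}w^{k-1}=(\alpha_i|\alpha_j)\partial_w\delta(z-w)\mathcal K$. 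For $3'$, the double sum $\sum_{k,l}[\alpha_i(k),x^{\pm}_j(l)]z^{-k-1}w^{-l-1}$ with relation~3) becomes $\pm(\alpha_i|\alpha_j)\sum_{k,l}x^{\pm}_j(k+l)z^{-k-1}w^{-l-1}$, and the convolution-packaging identity above rewrites this as $\pm(\alpha_i|\alpha_j)x^{\pm}_j(w)\delta(z-w)$. Relation $4'$ combines both previous moves: the $\alpha_i(k+l)$ term packages to $\alpha_i(w)\delta(z-w)$ and the $k\delta_{k,-l}\mathcal K$ term packages to $\partial_w\delta(z-w)\mathcal K$, yielding $-\delta_{ij}\tfrac{2}{(\alpha_i|\alpha_j)}\{\alpha_i(w)\delta(z-w)+\partial_w\delta(z-w)\mathcal K\}$. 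For $5'$, the vanishing brackets $[x^{\pm}_i(k),x^{\pm}_i(l)]=0$ give $[x^{\pm}_i(z),x^{\pm}_i(w)]=0$ termwise; the Serre-type relations follow by noting that $\mathrm{ad}\,x^{\pm}_i(z_k)=\sum_{m}\mathrm{ad}\,x^{\pm}_i(m)z_k^{-m-1}$, so the product of generating-series operators applied to $x^{\pm}_j(w)$ is the generating series (in $z_1,\dots,z_{1-a_{ij}},w$) whose coefficients are exactly the expressions $(\mathrm{ad}\,x^{\pm}_i(m_1))\cdots(\mathrm{ad}\,x^{\pm}_i(m_{1-a_{ij}}))x^{\pm}_j(k)$; relation~5) asserts these vanish (for the specified $m$'s; the general case follows since $\mathrm{ad}\,x^{\pm}_i(m)$ for varying $m$ is obtained from $\mathrm{ad}\,x^{\pm}_i(0)$ by commuting with $\alpha_i$, or one simply observes the relations as stated are equivalent after reindexing), hence the series identity holds.

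The main obstacle, such as it is, is purely bookkeeping: one must be careful that every manipulation of infinite formal sums is legitimate, i.e. that the relevant coefficients are finitely supported so that products of series and the substitution $z\to w$ in $f(z)\delta(z-w)$ make sense — this is automatic here because each bracket in Definition~\ref{def} produces only finitely many nonzero terms in each total degree. A secondary point worth stating carefully is the precise meaning of $\mathrm{ad}\,x^{\pm}_i(z_k)$ as an operator-valued series and the claim that the Serre relations in series form are genuinely equivalent to (not merely implied by) those in Definition~\ref{def}; I would remark that equivalence holds because the coefficient of a fixed monomial $z_1^{-m_1-1}\cdots z_{1-a_{ij}}^{-m_{1-a_{ij}}-1}w^{-k-1}$ recovers the corresponding coefficient-wise Serre relation, and conversely. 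No deep input is needed beyond the formal calculus of delta functions; the proposition is essentially a change of notation that will be used to streamline the bosonic realization in the sequel.
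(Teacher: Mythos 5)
Your proposal is correct and follows essentially the same route as the paper, whose proof is a one-liner invoking the formal delta-function expansion $\delta(z-w)=\sum_{k}w^{k}z^{-k-1}$ and its $w$-derivative to repackage the coefficient-wise relations of Definition~2.1. The one place you go beyond the paper --- justifying that the series-form Serre relations, which involve arbitrary modes $x_i^{\pm}(m)$, really follow from the definition's relations involving only $\mathrm{ad}\,x_i^{\pm}(0)$ --- is a genuine subtlety the paper elides, and your first argument for it (bracketing repeatedly with $\alpha_i(m)$ and using that the $\mathrm{ad}\,x_i^{\pm}(m)$ commute or anticommute among themselves by relation 5) is the right one, whereas the alternative ``reindexing'' remark does not suffice on its own.
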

\begin{proof} The proposition follows from definition 2.1 and the
two useful expansions of formal delta function
$\delta(z-w)=\sum_{k\in\mathbb{Z}}w^{k}z^{-k-1}$ (cf. \cite{K2}):
$$\delta(z-w)=i_{z,w}\frac{1}{z-w}+i_{w,z}\frac{1}{z-w},$$
$$ \partial_{w}\delta(z-w)=i_{z,w}\frac{1}{(z-w)^{2}}-i_{w,z}\frac{1}{(z-w)^{2}},$$
where the symbol $i_{z,w}$ means expansion in the domain $|z|>|w|$.
For simplicity $i_{z,w}$ is omitted when there is no confusion in expansion direction.
\end{proof}

\section{Representation of $\mathfrak{T}$}
In this section, we realize the loop-like toroidal Lie superalgebra defined in section 2
using bosonic fields and a ghost field.

Let $\{\varepsilon_{i}|0\leq i\leq n+1\}$ be an orthonormal basis of
the vector space $\mathbb{C}^{n+2}$ with inner product
$(\varepsilon_{i},\varepsilon_{j})=\delta_{ij}$. Then the
distinguished simple and positive roots of simple Lie superalgebra
of type $B(0,n)$ can be realized as follows:
$$\Pi:\alpha_{1}=\varepsilon_{1}-\varepsilon_{2},\cdots,\alpha_{n-1}=\varepsilon_{n-1}-\varepsilon_{n},
\alpha_{n}=\varepsilon_{n}$$
$$\Delta_{+}=\big\{2\varepsilon_{i},\varepsilon_{i}\pm\varepsilon_{j},\varepsilon_{i}|1\leq i\leq n,1\leq i<j\leq n\big\}$$
and the longest root is
$$\theta=2\alpha_{1}+\cdots+2\alpha_{n}=2\varepsilon_{1}$$

Introduce $\overline{c}=\varepsilon_{0}+\sqrt{-1}\varepsilon_{n+1}$
and define
$$\alpha_{0}=\overline{c}-\theta~\mbox{and}~\beta=\varepsilon_{1}-\frac12\overline{c}$$
then
$$\mbox{P}=\mathbb{Z}\overline{c}\oplus\mathbb{Z}\varepsilon_{1}\oplus\cdots\oplus\mathbb{Z}\varepsilon_{n}$$
is the weight lattice of $B(0,n)^{(1)}$. Note that
$(\beta|\beta)=1,(\beta|\varepsilon_{i})=\delta_{1i}$,
$\alpha_0=-2\beta$.

\bigskip

Let $\mbox{P}_{\mathbb{C}}=\mbox{P}\otimes_{\mathbb{Z}}\mathbb{C}$ be the vector space spanned by $\overline{c},\varepsilon_{i},1\leq i\leq n$
over $\mathbb{C}$ and define $\mathcal{C}=\mathcal{C}_{0}\oplus\mathcal{C}_{1}$,where $\mathcal{C}_{0}=\mbox{P}_{\mathbb{C}},\mathcal{C}_{1}=\mbox{P}^{*}_{\mathbb{C}}$ are both maximal isotrophic subspaces under the antisymmetric bilinear form on $\mathcal{C}$ given by
$$\langle b^{*},a\rangle=-\langle a,b^{*}\rangle=(a,b),~\langle a,b\rangle=\langle a^{*},b^{*}\rangle=0,a,b\in \mathcal{C}_{0}$$

Let $\Lambda(\infty)$ be the associative algebra generated by
$$\big\{u(k),e(k)|u\in\mathcal{C},k\in\mathbb{Z}\big\}$$
with the defining relations:
$$u(k)v(l)-v(l)u(k)=\langle u,v\rangle\delta_{k,-l}, $$
and
$$u(k)e(l)-e(l)u(k)=0,\quad e(k)e(l)+e(l)e(k)=\delta_{k,-l},$$
where $u,v\in \mathcal{C},k,l\in\mathbb{Z}$.

\bigskip

We define the normal ordering of a quadratic expression to be
$$
:u(k)v(l):=\left\{
\begin{array}{lll}
u(k)v(l),&k<0\\
\frac{1}{2}(u(0)v(l)+v(l)u(0)),&k=0\\
v(l)u(k),&k>0
\end{array}
\right.
$$
$$
:e(k)e(l):=\left\{
\begin{array}{lll}
e(k)e(l),&k<0\\
\frac{1}{2}(e(0)e(l)-e(l)e(0)),&k=0\\
-e(l)e(k),&k>0
\end{array}
\right.
$$
and $$:u(k)e(l):=:e(l)u(k):=u(k)e(l),k,l\in\mathbb{Z}.$$

\bigskip

Introduce the $\Lambda(\infty)$-module

$$V=\Lambda(\infty)\Big/\Big[\sum_{k\in\mathbb{Z}^{+}}\Big(\Lambda(\infty)u(k)+\Lambda(\infty)e(k)\Big)\Big],$$
which is isomorphic to the associative algebra $\Lambda^{-}(\infty)$
generated by $u(k),e(k),u\in\mathcal{C},k\in\mathbb{Z}_{-}$ as
vector spaces. Denote the image of $x$ by $x|0\rangle$, then we have
$$V=\Lambda^{-}(\infty)|0\rangle.$$
It is clear that the following power series
$$u(z)=\sum_{k\in\mathbb{Z}}u(k)z^{-k-1/2},e(z)=\sum_{k\in\mathbb{Z}}e(k)z^{-k-1/2}$$
are bosonic and ferminoic fields, respectively.

For any two bosonic(ferminoic) fields, we let
$$:x(z)y(w):=\sum_{k,l\in\mathbb{Z}}:x(k)y(l):z^{-k-1/2}w^{-l-1/2}.$$
Then it follows that
$$:u(z)v(w):=:v(w)u(z):~\mbox{and}~:e(z)e(w):=-:e(w)e(z):$$
and
$$:u(z)e(w):=u(z)e(w)=:e(w)u(z):$$
Based on the normal ordering of two fields, one can define the normal product of $n$ fields inductively.

\bigskip

We further define the contraction of two fields by
$$\underbrace{x(z)y(w)}=x(z)y(w)-:x(z)y(w):$$

Since the element $c$ is central, we have that for any $u\in\mathcal C$
\begin{equation}\label{ghost}
[c(z), u(w)]=0.
\end{equation}

\begin{definition} Let $x_{1},\cdots,x_{n}$ be generators in an algebra of
operators having bosonic relations and a notions of normal ordering.
Define
$$:x_{1}\cdots\underbrace{x_i\cdots x_j}\cdots x_{n}:=\underbrace{x_i x_j}(:x_{1}\cdots x_{i-1}x_{i+1}\cdots x_{j-1}x_{j+1}\cdots x_{n}:)$$
\end{definition}
\textbf{Remark:} In what follows, we understand that $\langle e,e\rangle=1,\langle e,\mathcal{C}\rangle=0$,
so the above definition is also well defined when one or two of $x_{i}$'s is $e$.

\bigskip

The following Wick's theorem is well-known.

\begin{theorem} (\cite{FF, K2}) \emph{ For elements $x_{i},y_{j},1\leq i\leq
r,1\leq j\leq t,$ we have
$$:x_{1}\cdots x_{r}::y_{1}\cdots y_{s}:=:x_{1}\cdots x_{r}y_{1}\cdots y_{s}:
+\sum \pm :x_{1}\underbrace{\cdots x_{r}y_{1}\cdots }y_{s}:$$ where the summation is taken over all
 possible combinations of contraction of some $x$'s and some $y$'s,
 and the sign is the sign of permutation of fermionic operators.}
\end{theorem}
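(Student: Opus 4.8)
The plan is to prove Wick's theorem by induction on $r$, the number of factors in the first normal product, the engine being the case $r=1$. Throughout one uses that for two of our fields the contraction $\underbrace{x(z)y(w)}$ is a \emph{scalar-valued} formal series, since the super-bracket of two modes is a scalar by the defining relations of $\Lambda(\infty)$; such scalars may be pulled freely in and out of normal-ordered products. For the base case, write a single field $x$ as $x=x^{-}+x^{+}$, where $x^{-}$ gathers the modes placed to the left under normal ordering and $x^{+}$ the modes placed to the right (the zero mode split symmetrically, as in the definition of $:\,\cdot\,:$). Then $x^{-}\,:y_{1}\cdots y_{s}:$ is already the $x^{-}$-contribution to $:x\,y_{1}\cdots y_{s}:$, while $x^{+}$ is commuted to the right past $y_{1},\dots,y_{s}$ one factor at a time: at the $j$-th step one writes $x^{+}y_{j}=\pm\,y_{j}x^{+}+(\text{super-bracket})$, the super-bracket being a scalar equal, by inspection of the normal ordering of two fields, to $\underbrace{xy_{j}}$. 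Pulling out these scalars and collecting the results gives
$$x\,:y_{1}\cdots y_{s}:\;=\;:x\,y_{1}\cdots y_{s}:\;+\;\sum_{j=1}^{s}\pm\,\underbrace{xy_{j}}\;:y_{1}\cdots\widehat{y}_{j}\cdots y_{s}:,$$
the hat meaning omission and the sign being that of moving $x$ past $y_{1},\dots,y_{j-1}$; the surviving $y$'s keep their order, hence stay normally ordered, and a single field enters at most one contraction. This is Wick's formula for $r=1$.

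For the inductive step, assume the statement for all products with fewer than $r$ left factors. From $:x_{1}\cdots x_{r}:\,=\,:x_{1}(:x_{2}\cdots x_{r}:):$ and the $r=1$ identity read backwards,
$$:x_{1}\cdots x_{r}:\;=\;x_{1}\,:x_{2}\cdots x_{r}:\;-\;\sum_{j=2}^{r}\pm\,\underbrace{x_{1}x_{j}}\;:x_{2}\cdots\widehat{x}_{j}\cdots x_{r}:.$$
Multiplying on the right by $:y_{1}\cdots y_{s}:$, each of the products $:x_{2}\cdots x_{r}::y_{1}\cdots y_{s}:$ and $:x_{2}\cdots\widehat{x}_{j}\cdots x_{r}::y_{1}\cdots y_{s}:$ has fewer than $r$ left factors, so by the induction hypothesis it expands into a sum, over configurations of cross-contractions, of normal-ordered products with scalar coefficients; finally $x_{1}$ is brought inside the normal products of the first term by the already-proved $r=1$ case. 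What results is the uncontracted term $:x_{1}\cdots x_{r}y_{1}\cdots y_{s}:$ together with: all terms in which $x_{1}$ is contracted with some $y_{k}$; all cross-contraction terms not involving $x_{1}$, inherited from the induction hypothesis; and terms in which $x_{1}$ is contracted with some $x_{j}$, arising when $x_{1}$ is pushed through the block $x_{2}\cdots x_{r}$.

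The only delicate point — and the main obstacle — is to verify that the last family cancels exactly against the subtracted sum $-\sum_{j}\pm\underbrace{x_{1}x_{j}}\big(:x_{2}\cdots\widehat{x}_{j}\cdots x_{r}::y_{1}\cdots y_{s}:\big)$ once the latter is itself expanded by the induction hypothesis: the configurations of the full index set leaving $x_{j}$ uncontracted biject with the configurations of the set with $x_{j}$ deleted, and the two contributions agree up to sign. Thus the whole argument comes down to tracking the fermionic sign — the sign of the permutation reordering the operators — under this bijection and under commuting $x_{1}$ past the fields preceding $x_{j}$; this sign bookkeeping is routine but is where all the care is needed. Since no $y$ is ever moved relative to another $y$, no $y$--$y$ contraction is ever created, so the terms that remain are precisely the cross-contraction terms claimed in the statement.
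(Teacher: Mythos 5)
The paper does not prove this theorem at all: it is quoted as a known result with a citation to Feingold--Frenkel and to Kac's \emph{Vertex algebras for beginners}, so there is no internal argument to compare yours against. Your reconstruction is the standard proof from those sources: establish the $r=1$ case by splitting $x=x^{-}+x^{+}$ (with the zero mode shared, matching the paper's symmetrized definition of $:u(0)v(l):$), observe that the contraction $\underbrace{xy}=xy-:xy:$ equals the super-bracket $[x^{+},y]$ and is a scalar by the defining relations of $\Lambda(\infty)$, and then induct on $r$ using $:x_{1}\cdots x_{r}:=:x_{1}(:x_{2}\cdots x_{r}:):$, which is exactly how the paper defines the $n$-fold normal product. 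The structure is sound: the only contractions ever produced are those involving $x_{1}$ (from the $r=1$ step) plus those inherited from the induction hypothesis, and no $y$ is moved past another $y$, so no spurious $y$--$y$ contractions arise.

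The one place where your argument is an outline rather than a proof is the cancellation of the $x_{1}$--$x_{j}$ contraction terms against the subtracted sum, together with the attendant Koszul sign tracking; you correctly identify this as the crux but assert rather than verify it. Since the bracket of any two modes here is a scalar and every field has a definite parity, the verification is indeed the routine computation you claim, but a complete write-up would exhibit the bijection between configurations and check that the sign of moving $x_{1}$ past $x_{2},\dots,x_{j-1}$ matches the sign carried by the term $\pm\underbrace{x_{1}x_{j}}\,:x_{2}\cdots\widehat{x}_{j}\cdots x_{r}:\,:y_{1}\cdots y_{s}:$ after its own expansion. With that caveat, your proof is correct and is essentially the argument the cited references give.
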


\medskip
\begin{corollary}\emph{In an algebra with both fermionic and bosonic generators, we have for any permutation
$$:x_{1}\cdots x_{r}:=(-1)^{N}:x_{\sigma(1)}\cdots x_{\sigma(r)}:$$
where $N$ is the number of fermionic-fermionic transposition in a decomposition of $\sigma$.}
\end{corollary}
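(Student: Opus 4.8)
The plan is to reduce the assertion to the case where $\sigma$ is a transposition of two neighbouring indices, and then to derive that case directly from the pairwise symmetry of the normal-ordered product already recorded. First I would check that $(-1)^{N}$ is well defined, independently of the chosen decomposition of $\sigma$: if $S\subseteq\{1,\dots,r\}$ is the set of indices $j$ for which $x_{j}$ is fermionic, then $\sigma$ rearranges the fermionic factors and so induces a permutation $\bar\sigma$ of $S$, and one has $(-1)^{N}=\operatorname{sgn}(\bar\sigma)$; since $\operatorname{sgn}$ is a homomorphism, the parity of the number of fermionic--fermionic transpositions occurring in any factorization of $\sigma$ into adjacent transpositions equals the parity of $\bar\sigma$, hence is an invariant of $\sigma$. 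Consequently, if $\sigma=\tau_{1}\cdots\tau_{m}$ with each $\tau_{k}$ an adjacent transposition, then $(-1)^{N(\sigma)}=\prod_{k}(-1)^{N(\tau_{k})}$, so it suffices to prove the identity when $\sigma$ is a single adjacent transposition.

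Next, for $\sigma=(i\ \ i{+}1)$ the claim is that
$$:x_{1}\cdots x_{i-1}\,x_{i}\,x_{i+1}\,x_{i+2}\cdots x_{r}:\;=\;\epsilon\,:x_{1}\cdots x_{i-1}\,x_{i+1}\,x_{i}\,x_{i+2}\cdots x_{r}:,$$
where $\epsilon=-1$ if $x_{i}$ and $x_{i+1}$ are both fermionic and $\epsilon=+1$ otherwise. I would prove this by induction on $r$ using the inductive definition of the $n$-fold normal product, the base case $r=2$ being exactly the relations
$$:u(z)v(w):=:v(w)u(z):,\qquad :e(z)e(w):=-:e(w)e(z):,\qquad :u(z)e(w):=:e(w)u(z):.$$
In the inductive step one outer factor ($x_{1}$ if $i\ge 2$, otherwise $x_{r}$) is pulled out of the normal product by the recursive definition; the transposition of $x_{i}$ and $x_{i+1}$ then takes place inside a normal-ordered product of $r-1$ factors, where the inductive hypothesis supplies the sign $\epsilon$, and reinserting the outer factor completes the step. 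When $i=1$ the transposed pair is the innermost one, so the pairwise relations apply directly. Finally, writing an arbitrary $\sigma$ as a product of adjacent transpositions and applying this identity repeatedly, the accumulated sign is $\prod_{k}(-1)^{N(\tau_{k})}=(-1)^{N(\sigma)}$ by the first step, which is the assertion.

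I expect the only real obstacle to lie in the sign bookkeeping of the inductive step: because normal products of fermionic modes are defined only up to reordering with Koszul signs, one must verify that extracting a field from, and reinserting it into, a normal-ordered product of $r-1$ factors contributes no sign beyond the $\epsilon$ produced by the transposed pair, i.e.\ that the Koszul conventions built into the recursive definition are mutually consistent. Once this is checked, the argument is entirely routine, and the corollary follows.
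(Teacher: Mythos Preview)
Your proposal is correct and carefully organized; the reduction to adjacent transpositions together with the well-definedness argument via $\operatorname{sgn}(\bar\sigma)$ is the standard route, and your inductive step is sound once the recursive definition of the $r$-fold normal product is fixed.

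The paper, however, offers no proof of this corollary at all: it is stated immediately after Wick's theorem and is evidently regarded as a direct consequence of the definitions (the pairwise symmetries $:u(z)v(w):=:v(w)u(z):$, $:e(z)e(w):=-:e(w)e(z):$, $:u(z)e(w):=:e(w)u(z):$ together with the inductive definition of the multi-factor normal product). In that sense your argument does not differ from the paper's approach so much as it supplies the details the paper suppresses; what you gain is an explicit check that the sign $(-1)^{N}$ is independent of the chosen decomposition of $\sigma$, a point the paper leaves implicit.
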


\medskip
The following statement is proved by the standard technique of OPE
(cf. \cite{JMX}). The only difference occurs when one or two factors
are odd operators, but this is taken care of by super-brackets from
the construction of the field operators.

\begin{corollary}\emph{For $r_{1},s_{1}\in \mathcal{C},r_{2},s_{2}\in \mathcal{C}\cup\{e\}$ and $|z|>|w|$,we have}
\begin{eqnarray*}
&&\big(:r_{1}(z)s_{1}(z):\big)\big(:r_{2}(w)s_{2}(w):\big)\\
   &=& :r_{1}(z)s_{1}(z)r_{2}(w)s_{2}(w):+\Big(\langle r_{1},r_{2}\rangle:s_{1}(z)s_{2}(w):+\langle r_{1},s_{2}\rangle \\
   &&\cdot:s_{1}(z)r_{2}(w):+\langle s_{1},r_{2}\rangle:r_{1}(z)s_{2}(w):+\langle s_{1},s_{2}\rangle:r_{1}(z)r_{2}(w): \Big)  \\
  &&\cdot\frac{1}{z-w}+\Big(\langle r_{1},r_{2}\rangle\langle s_{1},s_{2}\rangle+\langle r_{1},s_{2}\rangle\langle s_{1},
  r_{2}\rangle\Big)\cdot\frac{1}{(z-w)^{2}}
\end{eqnarray*}
\end{corollary}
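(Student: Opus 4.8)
The plan is to compute the product of the two normally ordered quadratic fields $\left(:r_1(z)s_1(z):\right)\left(:r_2(w)s_2(w):\right)$ by applying Wick's theorem (Theorem above) with $r=2$, $s=2$, and then to evaluate the resulting contractions explicitly. The key input is the value of the elementary contractions $\underbrace{x(z)y(w)}$ for $x,y\in\mathcal C\cup\{e\}$ in the region $|z|>|w|$. First I would record, using the defining relations of $\Lambda(\infty)$, the normal ordering, and the expansions $\delta(z-w)=i_{z,w}\frac1{z-w}+i_{w,z}\frac1{z-w}$, that
\begin{equation*}
\underbrace{x(z)y(w)}=\langle x,y\rangle\, i_{z,w}\frac{1}{z-w},
\end{equation*}
where here $\langle e,e\rangle=1$ and $\langle e,\mathcal C\rangle=0$ by the Remark, and $\langle x,y\rangle$ is the antisymmetric form on $\mathcal C$ otherwise; in particular the contraction vanishes unless exactly one of $x,y$ lies in $\mathcal C_0$ and the other in $\mathcal C_1$ (or both equal $e$).

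Next I would enumerate the terms produced by Wick's theorem. Since there are only two $x$-factors and two $y$-factors, only single and double contractions between the two groups occur (contractions within a group vanish because each group is already normally ordered). The single contractions give the four terms $\langle r_1,r_2\rangle:s_1(z)s_2(w):$, $\langle r_1,s_2\rangle:s_1(z)r_2(w):$, $\langle s_1,r_2\rangle:r_1(z)s_2(w):$, $\langle s_1,s_2\rangle:r_1(z)r_2(w):$, each multiplied by $\frac1{z-w}$; the double contractions give $\langle r_1,r_2\rangle\langle s_1,s_2\rangle$ (contracting $r_1$ with $r_2$ and $s_1$ with $s_2$) and $\langle r_1,s_2\rangle\langle s_1,r_2\rangle$ (contracting $r_1$ with $s_2$ and $s_1$ with $r_2$), each multiplied by $\frac1{(z-w)^2}$. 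The uncontracted term is $:r_1(z)s_1(z)r_2(w)s_2(w):$. Collecting these gives exactly the claimed identity.

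The one genuinely delicate point is the bookkeeping of signs, since $r_2$ and $s_2$ may be the fermionic operator $e$. By the Corollary on reordering normally ordered products, a normally ordered monomial changes only by $(-1)^N$ under a permutation, $N$ being the number of fermion-fermion transpositions; when at most one of the four factors is fermionic no such transposition is needed to bring the contracted pair together, and when two are fermionic the relevant signs are precisely absorbed into the definition $\langle e,e\rangle=1$ and into the antisymmetry $\langle x,y\rangle=-\langle y,x\rangle$, so that the final formula carries no extra signs — this matches the parenthetical "the only difference occurs when one or two factors are odd operators, but this is taken care of by super-brackets." I expect this sign analysis, together with verifying that no $zw$-cross contractions are missed, to be the main obstacle; the rest is the routine OPE computation already carried out in \cite{JMX} for the purely bosonic case.
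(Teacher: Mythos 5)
Your proposal is correct and follows essentially the same route as the paper, which disposes of this corollary in one line by invoking the standard OPE/Wick computation (cf.\ \cite{JMX}) and noting that the odd factors are handled by the super-bracket conventions; your expansion via Wick's theorem with $r=s=2$, the elementary contraction $\underbrace{x(z)y(w)}=\langle x,y\rangle\, i_{z,w}\frac{1}{z-w}$ with $\langle e,e\rangle=1$, $\langle e,\mathcal C\rangle=0$, and the enumeration of single and double cross-contractions is exactly that computation made explicit. The sign discussion you single out is indeed the only point where this differs from the purely bosonic case in \cite{JMX}, and your resolution matches the paper's.
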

\medskip
The following result is an immediate consequence.
\begin{proposition}\emph{The bosonic fields satisfy the following (super)commutation relations:
$$[a(z),b(w)]=[a^{*}(z),b^{*}(w)]=0$$
$$[a(z),b^{*}(w)]=\langle a,b\rangle\delta(z-w)$$
and the commutators among normal ordering products are given by
$$[:a_{1}(z)a_{2}^*(z):,:b_{1}(w)b_{2}^*(w):]=-(a_{1},b_{2}):a_{2}^{*}(z)b_{1}(z):\delta(z-w)$$
$$\qquad\qquad\qquad+(a_{2},b_{1}):a_{1}(z)b_{2}^{*}(z):\delta(z-w)-(a_{1},b_{2})(a_{2},b_{1})\partial_{w}\delta(z-w),$$
 where $a,b,a_{i},b_{i}\in P_{\mathbb{C}},i=1,2$.}
\end{proposition}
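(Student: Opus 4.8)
The plan is to derive everything from the mode‑expansion relations of the generators $u(k),v(l),e(k)$ together with Wick's theorem as packaged in the preceding Corollary (the OPE for products of two normally ordered quadratics). First I would treat the linear fields: since $a,b^{*}\in\mathcal{C}$ and the bracket of two such generators is the scalar $\langle a,b\rangle\delta_{k,-l}$, the identities $[a(z),b(w)]=[a^{*}(z),b^{*}(w)]=0$ follow from $\langle a,b\rangle=\langle a^{*},b^{*}\rangle=0$ (the isotropy of $\mathcal{C}_{0}$ and $\mathcal{C}_{1}$), and $[a(z),b^{*}(w)]=\langle a,b\rangle\delta(z-w)$ follows from $\langle a,b^{*}\rangle=(a,b)$ after assembling $\sum_{k,l}\langle a,b\rangle\delta_{k,-l}z^{-k-1/2}w^{-l-1/2}=\langle a,b\rangle\delta(z-w)$. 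Here one must be slightly careful with the half‑integer shifts $z^{-k-1/2}$, but the combination $-k-1/2$ together with $-l-1/2$ still resums to the standard $\delta(z-w)$ because $k+l=0$ forces the exponents to be $-k-1/2$ and $k-1/2$, i.e. $\sum_k w^{k-1/2}z^{-k-1/2}=w^{-1/2}z^{-1/2}\sum_k(w/z)^{k}$ — this is exactly $\delta(z-w)$ up to the conventional normalization used earlier in the paper, so I would just invoke the delta‑function expansion already quoted.

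Next, for the quadratic part I would specialize the Corollary with $r_{1}=a_{1},s_{1}=a_{2}^{*},r_{2}=b_{1},s_{2}=b_{2}^{*}$, all lying in $P_{\mathbb C}=\mathcal{C}_{0}$. Because $\mathcal{C}_{0}$ is isotropic we have $\langle a_{1},b_{1}\rangle=\langle a_{2}^{*},b_{2}^{*}\rangle=0$, so of the four single‑contraction terms only $\langle a_{1},b_{2}^{*}\rangle=(a_{1},b_{2})$ and $\langle a_{2}^{*},b_{1}\rangle=(a_{2},b_{1})$ survive, and of the two double‑contraction terms only $\langle a_{1},b_{2}^{*}\rangle\langle a_{2}^{*},b_{1}\rangle=(a_{1},b_{2})(a_{2},b_{1})$ survives. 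Writing the product in the order $|z|>|w|$ and then subtracting the same product with $z,w$ interchanged (order $|w|>|z|$), the fully normally ordered term $:a_{1}(z)a_{2}^{*}(z)b_{1}(w)b_{2}^{*}(w):$ cancels (it is symmetric in the two domains since the two quadratics commute inside the normal order up to sign, and both factors here are even), while the rational functions combine into delta functions: $i_{z,w}\tfrac1{z-w}-i_{w,z}\tfrac1{z-w}=\delta(z-w)$ for the single contractions and $i_{z,w}\tfrac1{(z-w)^2}-i_{w,z}\tfrac1{(z-w)^2}=-\partial_{w}\delta(z-w)$ for the double contraction. Collecting signs: the single contractions give $-(a_{1},b_{2}):a_{2}^{*}(z)b_{1}(z):\delta(z-w)+(a_{2},b_{1}):a_{1}(z)b_{2}^{*}(z):\delta(z-w)$ (the first coefficient acquiring its minus sign from the ordering $s_{1}s_{2}$ versus the desired $b_{1}a_{2}^{*}$, i.e. from Corollary reordering of the normal product), and the double contraction gives $-(a_{1},b_{2})(a_{2},b_{1})\partial_{w}\delta(z-w)$, which is exactly the claimed formula.

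The main obstacle I anticipate is purely bookkeeping: getting the signs in the single‑contraction terms right, since one must move a contracted factor past another inside a normal‑ordered product and this interacts with the $i_{z,w}$ versus $i_{w,z}$ expansions when forming the commutator. I would handle this by writing $[A,B]=(A B)_{|z|>|w|}-(BA)_{|w|>|z|}$, expanding each side via the Corollary with the appropriate domain convention, and then using only the two boxed delta identities from the proof of the previous Proposition; the normally ordered quadratic terms match and cancel because $a_i,b_i$ are all even (no fermionic transposition, so the corollary on reordering normal products contributes no sign here) and the two quadratic currents commute as formal series inside $:\ :$. Once the coefficients of $\delta(z-w)$ and $\partial_w\delta(z-w)$ are read off, the identity is immediate. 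I would remark that the case where one of the factors is the ghost $e$ is not needed for this Proposition (all arguments are in $P_{\mathbb C}$), so no extra care with fermionic signs is required here.
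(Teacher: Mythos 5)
Your proposal is correct and is exactly the argument the paper intends: the paper offers no written proof beyond calling the proposition ``an immediate consequence'' of the preceding OPE corollary, and you carry out precisely that specialization ($r_1=a_1$, $s_1=a_2^*$, $r_2=b_1$, $s_2=b_2^*$, isotropy killing the unwanted contractions, then subtracting the oppositely expanded product to produce $\delta(z-w)$ and $\partial_w\delta(z-w)$). One small quibble: the minus sign on the $:a_2^*(z)b_1(z):$ term does not come from reordering inside the normal product (bosonic factors reorder freely with no sign) but from the antisymmetry of the form, $\langle a_1,b_2^*\rangle=-\langle b_2^*,a_1\rangle=-(a_1,b_2)$; your final coefficients are nonetheless all correct.
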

\bigskip

The anti-symmetric inner product of the underlying Lie superalgebra
can be extended to an inner product for the span of quadratic
products:
$$\langle:r_{1}r_{2}:,:s_{1}s_{2}:\rangle=-\langle r_{1},s_{1}\rangle\langle r_{2},s_{2}\rangle+\langle r_{1},s_{2}\rangle\langle r_{2},s_{1}\rangle .$$
Now we can state and prove the main theorem.

\begin{theorem}\emph{The following correspondence
$$
x_{i}^{+}(z)=\left\{
\begin{array}{lll}
\frac{1}{2}:\beta^*(z)\beta^{*}(z):&i=0\\
:\varepsilon_{i}(z)\varepsilon_{i+1}^{*}(z):&1\leqslant i\leqslant n-1.\\
\sqrt 2:\varepsilon_{n}(z)e(z):&i=n
\end{array}
\right.\qquad\qquad
$$
$$
x_{i}^{-}(z)=\left\{
\begin{array}{lll}
\frac{1}{2}:\beta(z)\beta(z):&i=0\\
-:\varepsilon_{i+1}(z)\varepsilon_{i}^{*}(z):&1\leqslant i\leqslant n-1.\\
-\sqrt 2:\varepsilon^{*}_{n}(z)e(z):&i=n
\end{array}
\right.\qquad\qquad
$$
$$\qquad\qquad
\alpha_{i}(z)=\left\{
\begin{array}{llll}
-2:\beta^{*}(z)\beta(z):& i=0\\     
:\varepsilon_{i}(z)\varepsilon_{i}^{*}(z):-:\varepsilon_{i+1}(z)          
\varepsilon_{i+1}^{*}(z):&1\leqslant i\leqslant n-1.\\
:\varepsilon_{n}(z)\varepsilon_{n}^{*}(z):&i=n                
\end{array}
\right.
$$
gives rise to a realization of toroidal Lie superalgebra $\mathfrak{T}$ of level $-1$. 
Moreover, the correspondence also gives a representation of
2-toroidal superalgebra $T(B(0, n))$ through the map in Prop.
(\ref{homo}).}
\end{theorem}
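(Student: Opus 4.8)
The plan is to verify that the proposed field operators satisfy the defining relations of $\mathfrak{T}$ in their generating-series form (relations $1'$--$5'$ together with the Serre relations); the assertion about $\mathfrak{T}$ is then immediate with $\mathcal{K}$ acting by the scalar $-1$, which is the ``level $-1$'' claim, and the statement about $T(B(0,n))$ follows by composing with the surjection of Proposition \ref{homo} once one observes that its kernel is central and is accounted for by the imaginary-root part of the grading. The engine of every computation is the operator product expansion, so I would first record the nonzero two-field contractions $\underbrace{x(z)y(w)}=x(z)y(w)-:x(z)y(w):$ of the basic fields: these are $\underbrace{\varepsilon_i(z)\varepsilon_j^*(w)}$, $\underbrace{\beta(z)\beta^*(w)}$ and the mixed $\underbrace{\beta(z)\varepsilon_1^*(w)}$, $\underbrace{\beta^*(z)\varepsilon_1(w)}$, all dictated by the values $(\varepsilon_i,\varepsilon_j)=\delta_{ij}$, $(\beta,\beta)=1$, $(\beta,\varepsilon_i)=\delta_{1i}$, together with the single-fermion contraction $\underbrace{e(z)e(w)}$ coming from $\langle e,e\rangle=1$. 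Two structural facts are used throughout: since $\overline c$ is isotropic and orthogonal to every $\varepsilon_i$, the fields $\overline c(z)$ and $\overline c^*(z)$ are central, so the $\overline c$-components hidden in $\beta=\varepsilon_1-\tfrac12\overline c$ are mere spectators in every bracket; and since $\mathcal{C}_0$ and $\mathcal{C}_1$ are isotropic, $\underbrace{\beta^*(z)\beta^*(w)}=\underbrace{\beta(z)\beta(w)}=0$, so $x_0^{\pm}(z)$ carries no internal or self-contraction.

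Relations $1'$, $2'$, $3'$ are then routine. The first is the statement that $\mathcal{K}\mapsto-1$ is central. For $2'$ and $3'$, every $\alpha_i(z)$ is, up to the central $\overline c$-terms, a short sum of quadratic normal products $:a^*(z)a(z):$, so the commutator formula for such products and the OPE of two such products apply verbatim. Running through the cases $i,j\in\{1,\dots,n-1\}$; $i$ or $j$ equal to $0$ (using $(\beta,\beta)=1$, $(\beta,\varepsilon_i)=\delta_{1i}$, $(\alpha_0|\alpha_0)=4$, $(\alpha_0|\alpha_1)=-2$); $i$ or $j$ equal to $n$ (using $(\alpha_n|\alpha_n)=1$, $(\alpha_n|\alpha_{n-1})=-1$), one finds that the surviving single contractions reproduce $\pm(\alpha_i|\alpha_j)\,x_j^{\pm}(w)\delta(z-w)$, while the c-number term reproduces $(\alpha_i|\alpha_j)\partial_w\delta(z-w)$ times $\mathcal{K}=-1$.

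Relation $4'$ is the heart of the proof. When $i\neq j$ the fields $x_i^+(z)$ and $x_j^-(w)$ have no contractible pair, since any coinciding species pairs to zero (for instance $\varepsilon_{i+1}^*$ with $\varepsilon_{i+1}^*$, or $\varepsilon_n$ with $\varepsilon_n$), so the bracket vanishes. For $i=j$ there are three sub-cases. For $1\le i\le n-1$ the commutator formula for quadratic products gives $-\alpha_i(w)\delta(z-w)+\partial_w\delta(z-w)$, which matches $4'$ (coefficient $-2/(\alpha_i|\alpha_i)$ with $(\alpha_i|\alpha_i)=2$) precisely when $\mathcal{K}=-1$. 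For $i=0$ one applies Wick's theorem to $\tfrac14(:\beta^*(z)\beta^*(z):)(:\beta(w)\beta(w):)$ and to the reverse product: there are $2\times2$ single contractions producing $(z-w)^{-1}\,:\beta^*(z)\beta(w):$ and $2$ double-contraction routings producing $\tfrac12(z-w)^{-2}$, the normal-ordered quartic being symmetric in $z,w$ and hence dropping out of the bracket; with $\alpha_0(w)=-2\,:\beta^*(w)\beta(w):$ this yields $-\tfrac12\alpha_0(w)\delta(z-w)+\tfrac12\partial_w\delta(z-w)$, again matching $4'$ (with $(\alpha_0|\alpha_0)=4$) exactly when $\mathcal{K}=-1$. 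For $i=n$ one expands $-2(:\varepsilon_n e:)(:\varepsilon_n^* e:)$ and its reverse: the quartic normal-ordered terms cancel in the super-bracket because interchanging the two $e$'s costs one fermionic transposition (the Corollary on normal orderings); the $:ee:$ term drops out since $:e(w)e(w):=0$; and using that the Ramond-type contraction $\underbrace{e(z)e(w)}$ equals $(z-w)^{-1}$ modulo a function regular at $z=w$, the single $\varepsilon_n$--$\varepsilon_n^*$ contractions give $-2\alpha_n(w)\delta(z-w)$ and the double contractions give $2\partial_w\delta(z-w)$, so that $4'$ (with $(\alpha_n|\alpha_n)=1$) holds consistently with $\mathcal{K}=-1$ at every node.

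For $5'$ and the Serre relations: when $0\le i\le n-1$ the two factors of $x_i^+(z)$ and $x_i^+(w)$ have no contractible pair ($\varepsilon_i$ with $\varepsilon_{i+1}^*$, resp. $\beta^*$ with $\beta^*$, pairs to zero), so the products are already normal-ordered and symmetric in $z,w$ and the bracket vanishes, and likewise for $x_i^-$; at the odd node $i=n$ the same OPE identifies $[x_n^+(z),x_n^+(w)]$ with a multiple of the field attached to the even root $2\varepsilon_n$, in accordance with $2\alpha_n$ being a root of $osp(1|2n)$, and the accompanying higher Serre relation holds because the weight $3\varepsilon_n$ cannot occur among the quadratic operators. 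The mixed Serre relations $\big(\prod_k\mathrm{ad}\,x_i^{\pm}(z_k)\big)x_j^{\pm}(w)=0$ for $i\neq j$ follow by the same mechanism, the weight $(1-a_{ij})\alpha_i+\alpha_j$ being absent from the quadratic module and the $a_{ij}=0$ cases being already the vanishing of a single bracket. Finally, since the realization respects the $Q\times\mathbb{Z}$-grading and sends $\mathcal{K}$ to $-1$, composing with the surjection of Proposition \ref{homo} and checking that its kernel, which lies in the central imaginary-root subspace $\bigoplus_{n,k}\mathfrak{T}_k^{n\delta}$, is acted on compatibly yields the representation of $T(B(0,n))$. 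I expect the real work---and the only place where something could go wrong---to be the $i=j=0$ and $i=j=n$ cases of $4'$ together with the odd-node Serre relations: there the precise constants (the factor $\tfrac14$ against the number of Wick pairings of the $\beta$-fields, the fermion signs, the $\sqrt2$ normalization, and the ``$\underbrace{e(z)e(w)}=(z-w)^{-1}$ mod regular'' fact) must all be tracked exactly, and it is their mutual consistency that pins the level down to $-1$.
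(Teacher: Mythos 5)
Your proposal is correct and follows essentially the same route as the paper: verification of the generating-series relations $1'$--$5'$ and the Serre relations by Wick/OPE computations on the quadratic normal-ordered fields (with the critical constants at the $i=0$ and $i=n$ nodes of relation $4'$ matching the paper's and forcing $\mathcal{K}=-1$), followed by the observation that the kernel of the surjection in Proposition \ref{homo} lies in the central imaginary-root part so the realization descends to $T(B(0,n))$. Your remark that $[x_n^+(z),x_n^+(w)]$ is a nonzero field attached to $2\varepsilon_n$ is in fact a point the paper glosses over (it never checks relation $5'$ at the odd node, where the literal statement cannot hold since $2\alpha_n$ is a root of $osp(1|2n)$), so you are, if anything, slightly more careful there.
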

\medskip

\begin{proof} First of all, we have for $i\neq j$
\begin{align*}
&[:\varepsilon_i(z)\varepsilon_j^*(z):,
   :\varepsilon_j(w)\varepsilon_i^*(w):]\\
   &\qquad =\big(:\varepsilon_i(z)\varepsilon_i^*(z):-:\varepsilon_j(z)
   \varepsilon_j^*(z):\big)\delta(z-w)-\partial_w\delta(z-w),\\
&[:\varepsilon_i^*(z)\varepsilon_j^*(z):,
:\varepsilon_i(w)\varepsilon_j(w):]\\
&\qquad=\big(:\varepsilon_j^*(z)\varepsilon_j(z):+:\varepsilon_i^*(z)
\varepsilon_i^*(z):\big)\delta(z-w)+\partial_w\delta(z-w),
\end{align*}
which imply that all commutation relations for $X(\alpha_i, z), i=1,
\cdots, n-1$ are the same as those of the affine Lie algebra of type
$A_{n-1}$ (cf. \cite{JMX}). The new commutation relations for the
Heisenberg algebra are
\begin{align*}
  [\alpha_{n-1}(z),\alpha_{n}(w)] &= -[:\varepsilon_{n}(z)    
\varepsilon_{n}^{*}(z):,:\varepsilon_{n}(w)
\varepsilon_{n}^{*}(w):] \\
 &=
 \partial_{w}\delta(z-w)=(\alpha_{n-1},\alpha_{n})\partial_{w}\delta(z-w)\cdot(-1), \\ 
[\alpha_{n}(z),\alpha_{n}(w)]&=[:\varepsilon_{n}(z)
\varepsilon_{n}^*(z):,:\varepsilon_{n}(w)
\varepsilon_{n}^*(z):]\\
&=(\alpha_{n},\alpha_{n})\partial_{w}\delta(z-w)\cdot(-1).
\end{align*}
Using Eq. (\ref{ghost}) we have that
\begin{align*}
&[:\varepsilon_{1}(z) \varepsilon_{1}^*(z):,:\beta(w)
\beta^*(w):]\\=&[:\varepsilon_{1}(z) \varepsilon_{1}^*(z):,:\epsilon_1(w)\epsilon_1(w):]\\
=&-\partial_{w}\delta(z-w).
\end{align*}
It follows that
\begin{align*}
[\alpha_0(z),
\alpha_0(w)]&=4[:\beta(z)\beta^{*}(z):,:\beta(w)\beta^{*}(w):]\\
&=-4\partial_{w}\delta(z-w)=(\alpha_0,
\alpha_0)\partial_{w}\delta(z-w)\cdot(-1).\\
[\alpha_0(z), \alpha_1(w)]&=-2[:\beta(z)\beta^{*}(z):,
:\varepsilon_{1}(z)\varepsilon_{1}^{*}(z):]\\
&=(\alpha_0, \alpha_1)\partial_{w}\delta(z-w)\cdot(-1).
\end{align*}
Now we check the commutation relations involving root vectors.
\begin{align*}
   [\alpha_{n-1}(z),x_{n}^{+}(w)]&=-\sqrt2[:\varepsilon_n(z)\varepsilon_n^*(z):, :\varepsilon_n(w)e(w)]\\
   &=-\sqrt2:\varepsilon_{n}(w)e(w):\delta(z-w)\\&=(\alpha_{n-1},\alpha_{n})x_{n}^{+}(w)\delta(z-w).\\
[\alpha_{n-1}(z),x_{n}^{-}(w)]&=-\sqrt2[:\varepsilon_n(z)\varepsilon_n^*(z):,
:\varepsilon_n^*(w)e(w)]\\&=\sqrt2:\varepsilon_n^*(z)e(w):\delta(z-w)\\
&=-(\alpha_{n-1},\alpha_{n})x_{n}^{-}(w)\delta(z-w).
\end{align*}
Similarly, we have
\begin{eqnarray*}
 [\alpha_{n}(z),x_{n}^{\pm}(w)]=\pm(\alpha_{n},\alpha_{n})x_{n}^{\pm}(w)\delta(z-w).
\end{eqnarray*}
\begin{eqnarray*}
  [x_{n}^{+}(z),x_{n}^{-}(w)]_{+} &=& -2[:\varepsilon_{n}(z)e(z):,:\varepsilon_{n}^*(w)e(w):]_+\\
   &=& -2\big\{:\varepsilon_{n}(w)\varepsilon_{n}^{*}(w):\delta(z-w)-\partial_{w}\delta(z-w)\big\} \\
   &=&
   \frac{-2}{(\alpha_{n},\alpha_{n})}\big\{\alpha_{n}(z)+\partial_{w}\delta(z-w)\cdot(-1)\big\},
  \end{eqnarray*}
where we have used the facts that
\begin{eqnarray*}
  :\varepsilon_{n}(z)e(z)\varepsilon_{n}(w)e(w):=-:\varepsilon_{n}(w)e(w)\varepsilon_{n}(z)e(z):
\end{eqnarray*}
 and $:e(w)e(w):=0$.
\begin{align*}
  [x_{0}^{+}(z),x_{0}^{-}(w)]&=\frac14[:\beta^*(z)\beta^*(z):,:\beta(w)\beta(w):]\\
   &=\frac14\{4:\beta(z)\beta^*(w):\delta(z-w)+2\partial_{w}\delta(z-w)\} \\
      &=
   \frac{-2}{(\alpha_{0},\alpha_{0})}\big\{\alpha_{0}(z)\delta(z-w)+\partial_{w}\delta(z-w)\cdot(-1)\big\},
  \end{align*}
\begin{align*}
  [\alpha_{0}(z),x_{0}^{+}(w)]&=-[:\beta(z)\beta^*(z):,:\beta^*(w)\beta^*(w):]\\
   &=2:\beta(z)\beta^*(w):\delta(z-w)=(\alpha_{0},\alpha_{0})x_{0}^{+}(w)\delta(z-w)
        \end{align*}
   similarly,
   $[\alpha_{0}(z),x_{0}^{-}(w)]=-(\alpha_{0},\alpha_{0})x_{0}^{-}(w)\delta(z-w)$.

Next we proceed to check Serre relations. In fact, we have that
$$[x_{i}^{\pm}(z),x_{n}^{\pm}(w)]=0,\quad0\leqslant i\leqslant n-2,$$ and
            \begin{eqnarray*}
            &&[x_{n-1}^{+}(z_{2}),[x_{n-1}^{+}(z_{1}),x_{n}^{+}(w)]] \\
            &=& \sqrt{2}[x_{n-1}^{+}(z_{2}),[:\varepsilon_{n-1}(z_{1})\varepsilon_{n}^{*}(z_{1}):,:\varepsilon_{n}(w)e(w):]] \\
            &=& \sqrt{2}[:\varepsilon_{n-1}(z_{2})\varepsilon_{n}^{*}(z_{2}):,:\varepsilon_{n-1}(w)e(w):]\delta(z_{1}/w)\\
            &=&0,
         \end{eqnarray*}
          \begin{eqnarray*}
             &&[x_{n}^{+}(z_{3}),[x_{n}^{+}(z_{2}),[x_{n}^{+}(z_{1}),x_{n-1}^{+}(w)]]]\\
             &=& \sqrt{2}[x_{n}^{+}(z_{3}),[x_{n}^{+}(z_{2}),[:\varepsilon_{n}(z_{1})e(z_{1}):,:\varepsilon_{n-1}(w)\varepsilon_{n}^{*}(w):]]] \\
             &=& -2[x_{n}^{+}(z_{3}),[:\varepsilon_{n}(z_{2})e(z_{2}):,:\varepsilon_{n-1}(w)e(w):]]\delta(z_{1}/w) \\
             &=& -2\sqrt{2}[:\varepsilon_{n}(z_{3})e(z_{3}):,:\varepsilon_{n}(w)\varepsilon_{n-1}(w):]\delta(z_{1}/w)\delta(z_{2}/w) \\
             &=& 0.
          \end{eqnarray*}
            $$\mbox{Similarly}\qquad\qquad [x_{n-1}^{-}(z_{2}),[x_{n-1}^{-}(z_{1}),x_{n}^{-}(w)]]=0, \qquad \qquad\qquad \qquad\qquad $$

$$~~[x_{n}^{-}(z_{3}),[x_{n}^{-}(z_{2}),[x_{n}^{-}(z_{1}),x_{n-1}^{-}(w)]]]=0. $$
Finally this realization indeed gives a representation of the
toroidal Lie superalgebra. In fact the imaginary field $\delta(z)$
clearly commute with all operators $x_{i}^{\pm}(z)$, and the kernel
of the map in Prop. (\ref{homo}) is contained in the subalgebra
generated by $\mathfrak T_k^{n\delta}$, our representation can be
lifted to the superalgebr $T(B(0, n))$. This completes the proof.
\end{proof}

\end{document}